\newtheorem{theorem}{Theorem}[section]
\newtheorem{lemma}[theorem]{Lemma}
\newtheorem{question}[theorem]{Question}
\newtheorem{problem}[theorem]{Problem}
\theoremstyle{definition}
\newtheorem{remark}[theorem]{Remark}
\renewcommand{\leq}{\leqslant}
\renewcommand{\geq}{\geqslant}
\def\N{\text N}
\def\SL{\text{SL}}
\def\PSL{\mathrm{PSL}}
\def\P{\mathbf P}
\def\Aut{\text{Aut}}
\def\Out{\text{Out}}
\newcommand{\gen}[1]{\ensuremath{\langle #1\rangle}}
\def\blfootnote{\xdef\@thefnmark{}\@footnotetext}
\begin{document}
%\nocite{Sha}

\title{Connected components in the invariably generating graph of a finite group}

\author{Daniele Garzoni}
\address{Daniele Garzoni, Dipartimento di Matematica ``Tullio Levi-Civita'', Universit\`a degli Studi di Padova, Padova, Italy}
\email{daniele.garzoni@phd.unipd.it}

\maketitle

\begin{abstract}
   We prove that the invariably generating graph of a finite group can have an arbitrarily large number of connected components with at least two vertices.
\end{abstract}

\section{Introduction}
Given a finite group $G$ and a set $X=\{C_1, \ldots, C_t\}$ of conjugacy classes of $G$, we say that $X$ \textit{invariably generates} $G$ if $\gen{x_1, \ldots, x_t}=G$ for every $x_1\in C_1, \ldots, x_t\in C_t$. We write in this case $\gen{X}_I=G$. This concept was introduced by Dixon \cite{Dix92} with motivations from computational Galois theory, and has been widely studied in recent years.

In \cite{Gar}, the following definition was given. For a finite group $G$, the \textit{invariably generating graph} $\Lambda(G)$ of $G$ is the undirected graph whose vertices are the conjugacy classes of $G$ different from $\{1\}$, and two vertices $C$ and $D$ are adjacent if $\gen{C,D}_I=G$. If $G$ is not invariably $2$-generated, $\Lambda(G)$ is the empty graph. Even when $G$ is invariably $2$-generated, the graph $\Lambda(G)$ can have isolated vertices (e.g., when $G$ is not cyclic, the classes contained in the Frattini subgroup); define $\Lambda^+(G)$ as the graph obtained by removing the isolated vertices of $\Lambda(G)$. %\footnote{In \cite{Gar}, the symbols $\Gammainv$ and $\Gammainviso$ were used in place of $\Lambda$ and $\Xi$, respectively. Since it seems common to denote the generating graph (which we will recall in Subsection \ref{comparison_generating_graph}) by the symbol $\Gamma$, it might be appropriate to use a variant of this symbol for invariable generation.}
In this paper we prove the following result.

\begin{theorem}
\label{main_theorem}
For every positive integer $n$, there exists a finite group $G$ such that $\Lambda^+(G)$ has more than $n$ connected components.
\end{theorem}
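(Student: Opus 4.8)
My plan is to translate adjacency in $\Lambda(G)$ into the language of maximal subgroups and then exhibit an explicit family of soluble groups realising many components. For a nontrivial conjugacy class $C$, let $S(C)$ denote the set of conjugacy classes $[M]$ of maximal subgroups $M$ of $G$ with $C\cap M\neq\emptyset$. Since $\gen{x,y}\neq G$ exactly when $x,y$ lie in a common maximal subgroup, and classes are conjugation-invariant, two classes $C,D$ are adjacent in $\Lambda(G)$ if and only if $S(C)\cap S(D)=\emptyset$; in particular $C$ is isolated precisely when $S(C)$ meets $S(D)$ for every $D$. Thus $\Lambda^+(G)$ is the disjointness graph of the set system $\{S(C)\}_C$ with the universally-meeting sets deleted, and the task reduces to engineering this set system.

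The construction I would use is $G=G_d=V\rtimes T$ with $V=\mathbb{F}_p^d$ and $T=C_{\ell_1}\times\cdots\times C_{\ell_d}$, where $\ell_1,\ldots,\ell_d$ are distinct primes dividing $p-1$ (take $p\equiv 1\pmod{\ell_1\cdots\ell_d}$, which exists by Dirichlet), and the $i$-th factor $C_{\ell_i}=\gen{t_i}$ acts on the $i$-th coordinate of $V$ by a primitive $\ell_i$-th root of unity and trivially on the others. Because the $d$ coordinate modules are pairwise non-isomorphic irreducibles and $\gcd(|V|,|T|)=1$, I expect the maximal subgroups to split into two families indexed by $i\in\{1,\ldots,d\}$: the normal subgroups $A_i=V\rtimes M_i$, where $M_i$ is the index-$\ell_i$ subgroup of $T$, and the conjugacy class $B_i$ of $W_i\rtimes T$, where $W_i$ is the coordinate hyperplane $x_i=0$.

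The heart of the argument is a coordinatewise computation of $S$. Writing an element as $(v,t)$ with $t=\prod_i t_i^{a_i}$, one checks that it meets $A_i$ iff $a_i=0$, and meets $B_i$ iff $a_i\neq 0$ or $v_i=0$ (the latter via the image of $1-t$ modulo $W_i$). Hence each class has at every coordinate $i$ a well-defined state: state $B$ when $a_i\neq 0$ (contributing $\{B_i\}$), state $A$ when $a_i=0,v_i\neq 0$ (contributing $\{A_i\}$), and the degenerate case $a_i=0,v_i=0$ (contributing $\{A_i,B_i\}$), which forces the class to be isolated. Since the slots for distinct coordinates involve disjoint maximal subgroups, $S(C)\cap S(D)=\emptyset$ holds iff the two state vectors in $\{A,B\}^d$ are complementary at every coordinate. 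Therefore the non-isolated classes split according to complementary pairs $\{\sigma,\bar\sigma\}$, giving a complete bipartite graph inside each pair and no edge across pairs; as every $\sigma\in\{A,B\}^d$ is realised by an explicit element, $\Lambda^+(G_d)$ has exactly $2^{d-1}$ components, each with at least two vertices. Choosing $d$ with $2^{d-1}>n$ then proves the theorem (note that for $d=1$ one recovers the single-component complete bipartite graph of the Frobenius group, a useful consistency check).

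The main obstacle I anticipate is the rigorous determination of the maximal subgroups, namely showing the list $\{A_i,B_i\}$ is complete: using coprimality together with Maschke's theorem and Schur–Zassenhaus, a maximal subgroup not containing $V$ should arise as a complement to some irreducible quotient $V/W_i$, with all such complements conjugate, while those containing $V$ correspond to maximal subgroups of the abelian group $T$. Paired with the precise incidence computation for the $B_i$, this is the only nonroutine input; the reduction to complementary state vectors and the count of $2^{d-1}$ components is then bookkeeping.
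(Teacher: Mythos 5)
Your proposal is correct, and it takes a genuinely different route from the paper. The paper works with $G=S^{\beta}$, $S=\mathrm{PSL}_2(q)$: it uses the Kantor--Lubotzky criterion for invariable generation of direct powers (Lemma \ref{lemma_elementary_criterion}), the fact that $\Lambda^+(S)$ is bipartite because $S$ admits a $2$-covering by a Borel and a dihedral subgroup (Lemma \ref{lemma_bipartite}), and an asymptotic count of $|\Psi_2(S)|$ to produce at least $\tfrac12\binom{\beta}{\beta/2}$ components. You instead build explicit soluble groups. Note that your $G_d=\mathbb{F}_p^d\rtimes(C_{\ell_1}\times\cdots\times C_{\ell_d})$ is exactly the direct product $F_1\times\cdots\times F_d$ of the Frobenius groups $F_i=\mathbb{F}_p\rtimes C_{\ell_i}$; since the $F_i$ have pairwise non-isomorphic simple quotients ($C_{\ell_i}$ with distinct primes $\ell_i$), Goursat's lemma gives an alternative confirmation that the maximal subgroups are precisely your $A_i$ and the conjugates of your $B_i$ (both of prime index, $\ell_i$ and $p$ respectively, so maximality is automatic). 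Your sketched classification also goes through as stated: if $H$ is maximal and $H\not\supseteq V$, then $HV=G$, so $H\cap V$ is normal in $G$, maximality forces $H\cap V=W_i$, and Schur--Zassenhaus conjugates $H$ into $W_i\rtimes T$. The incidence computation (meets $A_i$ iff $a_i=0$; meets the class of $B_i$ iff $a_i\neq 0$ or $v_i=0$) is correct and class-invariant, the degenerate classes are indeed isolated, and the complementary-state-vector criterion yields exactly $2^{d-1}$ components, each a complete bipartite graph. What each approach buys: the paper's construction meshes with its broader themes ($2$-coverings, the invariant $\beta(S)$, Theorem \ref{main_PSL2}) and gives enormously many components; yours is elementary, self-contained, and gives an exact component count.

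One consequence of your construction deserves emphasis: your groups are \emph{soluble} and invariably $2$-generated (e.g.\ the all-$A$ class and the all-$B$ class invariably generate), yet $\Lambda^+(G_d)$ is disconnected already for $d=2$ --- concretely, $D_{14}\times F_{21}$ of order $294$ has $\Lambda^+$ equal to two disjoint copies of $K_{6,2}$. This would answer Question \ref{question_soluble} of the paper in the negative, whereas the paper states that the author was unable to construct soluble examples. Given this tension you should write out the verification in full detail, but I checked it independently (both via the semidirect-product analysis and via Goursat's lemma on the direct product) and found no flaw.
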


Theorem \ref{main_theorem} should be seen in comparison to the analogous graph for the case of usual generation; see Subsection \ref{comparison_generating_graph}.

In the proof of Theorem \ref{main_theorem}, we use $G=S^\beta$, where $S=\PSL_2(q)$ and $\beta=\beta(S)$ is the largest integer for which $S^\beta$ is invariably $2$-generated.

%If $S$ is a nonabelian finite simple group, the properties of invariable generation of $S$ translate into interesting combinatorial properties of the graph $\Lambda^+(S^\beta)$. We will exploit this fact.

The crucial observation about $S=\PSL_2(q)$ is that the graph $\Lambda^+(S)$ is bipartite, which follows from the fact that $S$ admits a $2$-covering (Lemma \ref{lemma_bipartite}). See Section \ref{section_comments} for further comments in this direction, related to the clique number and the chromatic number of $\Lambda^+(S)$, in case $S$ is a nonabelian finite simple group. 

\iffalse
proof of Theorem \ref{main_theorem} is made of two parts---the first is combinatorial and concerns the properties of the graph $\Lambda^+(S^\beta)$, where $S$ is any nonabelian finite simple group; the second concerns specific properties of $S=\PSL_2(q)$. None of the parts is difficult, but t

strategy of proof is to translate, via a combinatorial argument, properties of generation of $S$ into properties of the graph $\Lambda^+(S^\beta)$.

is a suitable direct power of a nonabelian finite simple group $S$. % $=S^\delta$ for nonabelian finite simple groups $S$ and for integers $\delta$.
We use $S=\PSL_2(q)$, although there are other possible choices.

A crucial ingredient is that $\Lambda^+(S)$ is bipartite, which follows from the fact that $S$ admits a $2$-covering (Lemma \ref{lemma_bipartite}). See Section \ref{section_comments} for definitions and further comments in this direction, related to clique number and chromatic number of $\Lambda^+(S)$, where $S$ is a nonabelian finite simple group. 

The proof of Theorem \ref{main_theorem} is made of two parts --- the first is combinatorial, and the second concerns generating properties of $S$.
\fi

In order to estimate the number of connected components of $\Lambda^+(S^\beta)$, %it is necessary to study invariable generation of $S$ by random conjugacy classes. Specifically,
we will need the following result. 
%We will also give a suitable lower bound to the number of connected components of $\Lambda^+(G)$ in our examples (see Theorem \ref{theorem_lower_bound}), which is not strictly necessary for the mere proof of Theorem \ref{main_theorem}. %Although the bound is not particularly interesting in itself,
%We will get the bound as a consequence of the following result.

\begin{theorem}
\label{main_PSL2}
Let $S=\emph{PSL}_2(q)$, and let $C_1, C_2$ be conjugacy classes of $S$ chosen uniformly at random. Then
\[
\P(\gen{C_1, C_2}_I=S)=1/2+O(1/q).
\]
\end{theorem}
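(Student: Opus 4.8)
The plan is to use the explicit knowledge of the conjugacy classes of $\PSL_2(q)$ and the classification of its maximal subgroups (Dickson's theorem) to compute the probability directly. The key structural fact is that for two classes $C_1, C_2$ to invariably generate $S$, we need: for all choices of representatives $x_i \in C_i$, the subgroup $\langle x_1, x_2\rangle$ is not contained in any maximal subgroup. Since invariable generation depends only on the classes, $\langle C_1, C_2\rangle_I \neq S$ precisely when there is a maximal subgroup $M$ meeting both $C_1$ and $C_2$ (intersecting every conjugate appropriately); more carefully, $\langle C_1, C_2\rangle_I \neq S$ iff some maximal subgroup $M$ contains a conjugate of $x_1$ and a conjugate of $x_2$ simultaneously for a fixed embedding — this reduces to asking whether both classes intersect a common maximal subgroup.

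First I would recall the conjugacy class structure: $\PSL_2(q)$ has roughly $q$ conjugacy classes, which split (up to bounded error) into three regimes by element order --- the unipotent classes, the semisimple classes lying in a split torus of order $(q-1)/d$, and those lying in a nonsplit torus of order $(q+1)/d$, where $d = \gcd(2,q-1)$. The maximal subgroups are the Borel subgroup, the dihedral groups of order $2(q\pm 1)/d$ (normalizers of tori), and a bounded number of ``small'' subgroups ($A_4$, $S_4$, $A_5$, and subfield subgroups $\PSL_2(q_0)$). The small subgroups contain only $O(1)$ classes and so contribute $O(1/q)$ to the probability; likewise the Borel contributes $O(1/q)$ since the unipotent and split-torus classes it contains form a $O(1/q)$ fraction of all $\sim q$ classes. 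Thus the dominant contribution comes from the two families of dihedral maximal subgroups.

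The heart of the computation is the following. A split-torus semisimple class lies in a dihedral normalizer of the split torus, and a nonsplit-torus class lies in a dihedral normalizer of the nonsplit torus, and essentially no maximal subgroup contains both a generic split class and a generic nonsplit class. Hence two classes fail to invariably generate roughly when they are ``of the same torus type'': two split classes are joined by a common dihedral subgroup, and two nonsplit classes likewise, while a split class and a nonsplit class generically do invariably generate. Since the split classes and nonsplit classes each make up about half of all classes (each $\sim q/2$ out of $\sim q$), the probability that a random pair invariably generates --- i.e., that one is split-type and the other nonsplit-type, up to the $O(1/q)$ corrections from unipotent, Borel, and small subgroups --- is $2 \cdot \frac12 \cdot \frac12 = \frac12$, matching the claimed $1/2 + O(1/q)$. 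This also explains the bipartite structure of $\Lambda^+(S)$ invoked in Lemma \ref{lemma_bipartite}: the split-type and nonsplit-type classes form the two parts.

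The main obstacle will be controlling the ``same-type'' cases precisely: I must verify that two distinct split classes (and two distinct nonsplit classes) essentially always share a common dihedral maximal subgroup --- equivalently, that two elements of a common cyclic torus generate a subgroup of that torus or its normalizer, hence never $S$ --- and conversely that a split and a nonsplit class almost never share any maximal subgroup. The genuine care is needed at the boundary: elements of order $2$ lie in many subgroups, elements whose order divides the order of a small maximal subgroup can be absorbed there, and involutions or low-order elements can belong to several dihedral groups simultaneously. Each such exceptional family has size $O(1)$ among the $\sim q$ classes, so contributes $O(1/q)$; the work is to enumerate these families and bound their sizes uniformly in $q$, then assemble the counts into the stated asymptotic.
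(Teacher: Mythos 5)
Your proposal is correct and follows essentially the same route as the paper: there, Theorem \ref{main_PSL2} is deduced from the class count in Lemma \ref{lemma_conjugacy_classes_PSL}(3) together with Theorem \ref{theorem_estimate_is}, whose proof is exactly your split-torus/nonsplit-torus dichotomy --- same-type classes always meet a common proper (dihedral or Borel) subgroup, mixed-type pairs invariably generate except for coincidences inside $A_4$, $S_4$, $A_5$ or subfield subgroups, and the exceptional classes (unipotents, involutions) number $O(1)$ among the $\sim q/d$ classes. One slip to fix in the write-up: a subfield subgroup $\mathrm{PSL}_2(q^{1/r})$ meets $O(q^{1/r})$ conjugacy classes of $S$, not $O(1)$, so the bad mixed pairs it produces number $O(q^{2/r})\leq O(q^{2/3})$ for odd primes $r$ (and $\mathrm{PGL}_2(q^{1/2})$ is excluded outright, since its element orders prime to $p$ divide $q-1$, so it meets no nonsplit class), which is still absorbed in the $O(q)$ error, as in the paper.
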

The proof of Theorem \ref{main_PSL2} is straightforward, since the subgroups and conjugacy classes of $\PSL_2(q)$ are known very explicitly (and one can be much more precise about the error term).

For our application to Theorem \ref{main_theorem}, we could somewhat shorten the proof by imposing some restrictions on $q$ (e.g., by requiring $q$ prime). However, we prefer to state and prove Theorem \ref{main_PSL2} in general. Indeed, we find it interesting that the asymptotic behaviour of $\P(\gen{C_1, C_2}_I=S)$ is equal to the asymptotic behaviour of $\P(\gen{x_1^S,x_2^S}_I=S)$, where $x_1, x_2\in S$ are random \textit{elements}. For this, see \cite[Subsection 6.1]{GarMcKem}.

%For a general finite simple group $S$, the problem of invariable generation of $S$ by random conjugacy classes is wide open (and completely unexplored). By contrast, the case of random elements is much better understood (\cite{PPR, EFG, McK, GarMcKem}).

\subsection{Some context on $\Lambda(G)$}
\label{context_invgen}
Guralnick--Malle \cite{GM} and Kantor--Lubotzky--Shalev \cite{KLS} independently proved that every finite simple group $S$ is invariably $2$-generated, so that $\Lambda(S)$ is not the empty graph. In \cite{Gar}, the author studied the graph $\Lambda(G)$, for the case where $G$ is an alternating or symmetric group. He proved that $\Lambda^+(G)$ is connected with diameter at most $6$ in these cases (with the exception of $S_6$, which is not invariably $2$-generated).

At present, it is not known whether $\Lambda^+(S)$ is connected if $S$ is nonabelian simple (see \cite[Question 1.6]{Gar}).

In \cite{Gar}, an analogous graph, denoted by $\Lambda_e(G)$, was defined. Its vertices are the nontrivial \textit{elements} of $G$, and two vertices are adjacent if the corresponding classes invariably generate $G$. In \cite{GarMcKem}, it was proved that, if $S$ is nonabelian simple, then either $S$ belongs to three explicit families of examples, or the proportion of isolated vertices of $\Lambda_e(S)$ tends to zero as $|S|\rightarrow \infty$. See \cite[Corollary 1.6]{GarMcKem} for a precise statement and further comments.

\subsection{Comparison to usual generation}
\label{comparison_generating_graph}

For a finite group $G$, the \textit{generating graph} $\Gamma(G)$ of $G$ is the undirected graph whose vertices are the nonidentity elements of $G$, with vertices $x$ and $y$ adjacent if $\gen{x,y}=G$. This graph %was first introduced by Liebeck and Shalev \cite{LSh23gen}, and
has been intensively studied in the last two decades; see Burness \cite{Bursurvey} and Lucchini--Mar\'oti \cite{LucMar} for many results in this context.

Again, the graph $\Gamma(G)$ can have isolated vertices, and we consider the graph $\Gamma^+(G)$ obtained by removing the isolated vertices of $\Gamma(G)$.

It is known that $\Gamma^+(G)$ is connected in several cases. For instance, Burness--Guralnick--Harper \cite{BGH} showed that, if $G$ is a finite group such that every proper quotient of $G$ is cyclic, then $\Gamma(G)=\Gamma^+(G)$ and $\Gamma(G)$ is connected with diameter at most $2$ (the special case of $G$ simple was proved by Guralnick--Kantor \cite{GK} and Breuer--Guralnick--Kantor \cite{BGK}).

We will recall other results of the same flavour, proved by Crestani--Lucchini \cite{CreLuc, CreLuc2}, in Section \ref{section_comments}.

%If $G$ is nonabelian simple, Guralnick--Kantor \cite{GK} showed that $\Delta(G)=\Gamma(G)$, and Breuer--Guralnick--Kantor \cite{BGK} showed that $\Gamma(G)$ is connected with diameter $2$. Recently, Burness--Guralnick--Harper \cite{BGH} generalized this result, showing that the same conclusion holds whenever $G$ is a finite group with every proper quotient cyclic.

%It is known that $\Gamma^+(G)$ is connected in several cases (see Burness--Guralnick--Harper \cite{BGH}, Crestani--Lucchini \cite{CreLuc, CreLuc2}). % Lucchini \cite{Lucgensol}). %(we will recall these results in Section \ref{section_comments}).

On the other hand, no example of $G$ is known for which $\Gamma^+(G)$ is disconnected. In fact, it is believed that there should be no such example; see Acciarri--Lucchini \cite[comments after Corollary 2.7]{acciarrilucchini} for a more general conjecture, which would imply that $\Gamma^+(G)$ is connected for every $2$-generated finite group $G$. 

This determines a sharp difference with respect to Theorem \ref{main_theorem}. We note that this difference does not occur for nilpotent groups. Indeed, in a finite nilpotent group every maximal subgroup is normal, therefore the concepts of generation and invariable generation coincide. See Harper--Lucchini \cite{HarLuc} for results on the generating graph of finite nilpotent groups.

%We mention that, in case of profinite groups, the generating graph can have a radically different behaviour. %, which is similar to our Theorem \ref{main_theorem}.
%Indeed, Lucchini \cite{Lucprofinite} constructed examples of $G$ profinite in which $\Gamma^+(G)$ has $2^{\aleph_0}$ connected components. (In this case, generation must be interpreted topologically.)

%The organization of the paper is simple. In Section \ref{section_proofs} we prove Theorems \ref{main_theorem} and \ref{main_PSL2}, and in Section \ref{section_comments} we make further comments and propose some problems.

\subsection*{Acknowledgements} The author thanks Andrea Lucchini for a discussion about the literature on the generating graph of a finite group, and Daniela Bubboloni for a discussion about the literature on normal coverings of finite simple groups.

\section{Proof of Theorems \ref{main_theorem} and \ref{main_PSL2}}
\label{section_proofs}

\subsection{Direct powers of finite simple groups} Throughout this subsection, $S$ denotes a nonabelian finite simple group. We review some properties of invariable generation of direct powers of $S$, which reflect some interesting properties of the corresponding invariably generating graphs. The key tool is an elementary criterion due to Kantor and Lubotzky \cite{KLu}, which we recall.

Denote by $\Psi_2(S)$ the set of all pairs $(C_1, C_2)$, where $C_i$ is a conjugacy class of $S$, and $\gen{C_1, C_2}_I=S$. %(that is, $\gen{x_1,x_2}=S$ for every $x_1\in C_1$ and $x_2\in C_2$).
As recalled in Subsection \ref{context_invgen}, it is known that $\Psi_2(S) \neq \varnothing$.

Now let $t$ be a positive integer, and let $C$ and $D$ be conjugacy classes of $S^t$, with $C=C_1\times \cdots \times C_t$ and $D=D_1\times \cdots \times D_t$ (and $C_i$ and $D_i$ are conjugacy classes of $S$). Consider the matrix
\[
A_{C,D}=
  \begin{pmatrix}
    C_1 & C_2 & \cdots & C_t \\
   D_1 & D_2 & \cdots & D_t
  \end{pmatrix}.
\]

\begin{lemma} 
\label{lemma_elementary_criterion}
We have that $\gen{C,D}_I=S^t$ if and only if the following conditions are both satisfied:

\begin{itemize}
    \item[(i)] Each column of $A_{C,D}$ belongs to $\Psi_2(S)$, and
    \item[(ii)] No two columns of $A_{C,D}$ lie in the same orbit for the diagonal action of $\emph{Aut} (S)$ on $\Psi_2(S)$.
\end{itemize}
\end{lemma}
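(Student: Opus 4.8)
The plan is to reduce the invariable (conjugacy-class) statement to the corresponding element-level criterion for when two elements generate $S^t$, and then to quantify over all representatives. The element-level criterion reads: writing $x=(x_1,\dots,x_t)$ and $y=(y_1,\dots,y_t)$, one has $\gen{x,y}=S^t$ if and only if (a) $\gen{x_i,y_i}=S$ for every $i$, and (b) for all $i\neq j$ there is no $\phi\in\Aut(S)$ with $x_j=\phi(x_i)$ and $y_j=\phi(y_i)$. This is the engine; everything else is bookkeeping.

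To establish the element-level criterion I would invoke the structure of subdirect products of a direct power of a nonabelian simple group. For the forward direction: projecting $\gen{x,y}=S^t$ onto the $i$-th factor gives (a), while projecting onto the pair of factors $(i,j)$ and observing that a common automorphism $\phi$ as in (b) would force this projection into the proper ``diagonal'' subgroup $\{(s,\phi(s)):s\in S\}$ gives (b). For the converse, set $H=\gen{x,y}$; condition (a) says $H$ surjects onto each factor, so $H$ is a subdirect product. By Goursat's lemma together with the simplicity of $S$, any \emph{proper} subdirect product of $S^t$ has the property that some pair of coordinates $(i,j)$ projects onto a full diagonal $\{(s,\phi(s)):s\in S\}$ for a suitable $\phi\in\Aut(S)$; reading off the $i$-th and $j$-th components of $x$ and $y$ then yields $x_j=\phi(x_i)$ and $y_j=\phi(y_i)$, contradicting (b). Hence $H=S^t$.

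Next I would lift to conjugacy classes. By definition $\gen{C,D}_I=S^t$ means that the element-level criterion holds simultaneously for every $x\in C=C_1\times\cdots\times C_t$ and every $y\in D=D_1\times\cdots\times D_t$; since the coordinates are chosen independently, this is equivalent to: for no choice of representatives does (a) fail at some coordinate and for no choice does (b) fail at some pair. The universal form of (a) is precisely that $\gen{C_i,D_i}_I=S$ for all $i$, i.e.\ every column of $A_{C,D}$ lies in $\Psi_2(S)$, giving (i). For (ii), the crucial point is the equivalence between the columns $(C_i,D_i)$ and $(C_j,D_j)$ lying in a single $\Aut(S)$-orbit and the existence, for some choice of representatives, of one $\phi\in\Aut(S)$ realising (b): if $\phi(C_i)=C_j$ and $\phi(D_i)=D_j$, then for any $x_i\in C_i$, $y_i\in D_i$ the elements $x_j:=\phi(x_i)\in C_j$ and $y_j:=\phi(y_i)\in D_j$ are legitimate coordinates of some $x\in C$, $y\in D$ violating (b); conversely, if some representatives satisfy $x_j=\phi(x_i)$ and $y_j=\phi(y_i)$, then, using that $\phi$ fixes $S$ setwise so that $\phi(g^S)=\phi(g)^S$, one obtains $C_j=\phi(C_i)$ and $D_j=\phi(D_i)$. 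Thus the failure of (ii) is exactly the existence of a choice of representatives violating (b).

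Combining these, $\gen{C,D}_I\neq S^t$ holds if and only if for some representatives the element-level criterion fails, if and only if (a) fails at some coordinate or (b) fails at some pair, if and only if (i) or (ii) fails; this is the contrapositive of the lemma. I expect the only delicate step to be the reverse direction of the element-level criterion---namely pinning down the subdirect-product structure via Goursat's lemma and the simplicity of $S$---together with the care needed to interchange ``$\Aut(S)$-orbit of a pair of classes'' with ``common automorphism relating chosen representatives'', where the identity $\phi(g^S)=\phi(g)^S$ is precisely what makes the two pictures coincide.
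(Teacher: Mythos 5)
Your proposal is correct, but it differs from the paper in an important structural way: the paper does not prove Lemma \ref{lemma_elementary_criterion} at all---its ``proof'' is a citation to \cite[Proposition 6]{KLu} and \cite[Lemma 20]{DetLuc}. What you have written is essentially a self-contained reconstruction of the argument underlying those references. Your two-step decomposition is the standard one: first the element-level criterion for $\gen{x,y}=S^t$ (forward direction by projecting onto single factors and onto pairs of factors; converse by the structure of subdirect products of $S^t$ for $S$ nonabelian simple, where the key facts are that $H\cap S_t$ is normal in $S_t$, hence trivial or full, and that proper subdirect subgroups are products of diagonals over a partition of the coordinates---this is the inductive extension of Goursat's lemma that you gesture at), and second the quantifier exchange over representatives, where the identity $\phi(g^S)=\phi(g)^S$ converts ``some choice of representatives is linked by a common automorphism'' into ``the columns lie in the same $\Aut(S)$-orbit.'' Both halves are handled correctly, including the slightly delicate logical point that the failure of (b) for columns not in $\Psi_2(S)$ is absorbed into the failure of (i), so the contrapositive bookkeeping is sound. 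What your approach buys is self-containedness and transparency about exactly where nonabelian simplicity is used; what the paper's citation buys is brevity, since the result is classical. The one point you should make rigorous if this were to be included verbatim is the appeal to ``Goursat's lemma together with simplicity'' for general $t$: Goursat handles $t=2$, and the general case needs the short induction on $t$ (using that normal subgroups of $S^k$ are sub-products), which you assume rather than prove.
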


\begin{proof}
See \cite[Proposition 6]{KLu}, and also \cite[Lemma 20]{DetLuc}.
\end{proof}

Let $\beta=\beta(S)$ be the largest integer for which $S^\beta$ is invariably $2$-generated. Lemma \ref{lemma_elementary_criterion} implies that $\beta(S)$ is equal to the number of orbits for the diagonal action of $\Aut (S)$ on $\Psi_2(S)$. We note the following fact.

\begin{lemma}
\label{lemma_chain_inequalities}
We have
\[
\frac{|\Psi_2(S)|}{|\emph{Out}(S)|}\leq \beta(S) \leq |\Psi_2(S)|.
\]
\end{lemma}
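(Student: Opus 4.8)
The plan is to start from the identification, recorded immediately before the statement, of $\beta(S)$ with the number of orbits of the diagonal action of $\Aut(S)$ on $\Psi_2(S)$. Once this is in hand, both inequalities become instances of the elementary bounds relating the number of orbits of a finite group action to the cardinality of the set acted upon. The one substantive input is that this action in fact factors through $\Out(S)$, and it is this that turns the trivial bound ``orbits $\ge |\Psi_2(S)|/|\Aut(S)|$'' into the sharper lower bound claimed.

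First I would record the key observation that inner automorphisms act trivially. If $\phi\in\Aut(S)$ is conjugation by some $g\in S$, then for any conjugacy class $C$ of $S$ we have $C^\phi=g^{-1}Cg=C$, since a conjugacy class is invariant under conjugation. Hence $\mathrm{Inn}(S)$ is contained in the kernel of the action of $\Aut(S)$ on the set of conjugacy classes of $S$, and so in the kernel of the diagonal action on $\Psi_2(S)$. Consequently that action descends to an action of $\Out(S)=\Aut(S)/\mathrm{Inn}(S)$ having exactly the same orbits. Write $O_1,\dots,O_{\beta(S)}$ for these orbits, so that $|\Psi_2(S)|=\sum_{i=1}^{\beta(S)}|O_i|$.

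For the upper bound $\beta(S)\le|\Psi_2(S)|$, I would simply use that each orbit is nonempty, i.e. $|O_i|\ge 1$, so $|\Psi_2(S)|=\sum_{i=1}^{\beta(S)}|O_i|\ge\beta(S)$. For the lower bound, the orbit--stabilizer theorem applied to the $\Out(S)$-action shows that each $|O_i|$ divides $|\Out(S)|$, and in particular $|O_i|\le|\Out(S)|$; summing gives $|\Psi_2(S)|=\sum_{i=1}^{\beta(S)}|O_i|\le\beta(S)\,|\Out(S)|$, and rearranging yields $|\Psi_2(S)|/|\Out(S)|\le\beta(S)$.

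The whole argument is elementary, so there is no real obstacle to overcome; the only point requiring care is the reduction from $\Aut(S)$ to $\Out(S)$ via the triviality of the inner action on conjugacy classes, since this is precisely what makes the lower bound meaningful.
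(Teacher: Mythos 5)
Your proposal is correct and follows the same route as the paper: both arguments rest on the identification of $\beta(S)$ with the number of $\Aut(S)$-orbits on $\Psi_2(S)$, deduce the upper bound from the orbits being nonempty, and obtain the lower bound by observing that $\mathrm{Inn}(S)$ acts trivially so that each orbit has size at most $|\Out(S)|$. Your write-up merely spells out (via orbit--stabilizer) what the paper states in one line.
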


\begin{proof}
The second inequality is clear, and the first follows from the fact that $\text{Inn}(S) \cong S$ acts trivially in the relevant action, hence each orbit has size at most $|\Out(S)|$.
\end{proof}

If $S$ is a nonabelian finite simple group, then $\Out(S)$ is ``very'' small. For instance, $|\Out(S)|=O(\log_2|S|)$ (see e.g. \cite[Tables 5.1.A--5.1.C]{kleidmanliebeck} for the exact value of $|\Out(S)|$ in each case). We expect $|\Out(S)|$ to be much smaller than $|\Psi_2(S)|$ for every sufficiently large nonabelian finite simple group $S$. Therefore, $|\Psi_2(S)|$ should be, in some sense, a good approximation for $\beta(S)$.

\begin{remark}
Let us compare Lemma \ref{lemma_chain_inequalities} with the analogous problem for classical generation.

%We make a one-paragraph digression in order to compare to the case of classical generation.
Let $\delta=\delta(S)$ be the largest integer for which $S^\delta$ is $2$-generated. In contrast to the situation for $\beta(S)$, there is an exact formula for $\delta(S)$, namely, $\delta(S)=\phi_2(S)/|\Aut(S)|$, where $\phi_2(S)$ denotes the number of ordered pairs $(x,y)\in S^2$ such that $\gen{x,y}=S$. (This goes back to Hall \cite{Hal} in the 1930s and has been widely used.) %Lemma \ref{lemma_chain_inequalities} instead gives only bounds for $\beta(S)$.
The difference is that the diagonal action of $\Aut(S)$ on the set of generating pairs of elements is semiregular (i.e., only the identity fixes a generating pair), while this need not be the case for the action of $\Aut(S)$ on the set of invariable generating pairs of conjugacy classes.
\end{remark}

Lemma \ref{lemma_elementary_criterion} describes quite precisely the graph $\Lambda^+(S^\beta)$. Indeed, any arc in the graph is obtained as follows (and only in this way). Construct a $2\times \beta$ matrix, in which the columns form a set of representatives for the $\Aut(S)$-orbits on $\Psi_2(S)$. Then the first row is adjacent to the second row in $\Lambda^+(S^\beta)$ (here we are identifying a conjugacy class $C_1\times \cdots \times C_\beta$ of $S^\beta$ with a row vector $(C_1, \ldots, C_\beta)$). Since $\Aut(S^\beta)\cong \Aut(S)\wr \text{Sym}(\beta)$ acts by automorphisms on $\Lambda^+(S^\beta)$, we also see that $\Lambda^+(S^\beta)$ is arc-transitive.

\subsection{The case $S=\text{PSL}_2(q)$} Throughout this subsection, we let $S=\PSL_2(q)$, where $q\geq 4$ is a power of the prime $p$, and we let $d=(2,q-1)$.

The maximal subgroups of $S$ are known by work of Dickson \cite{dicksonold} (see also \cite[Table 8.1]{bray_holt_roneydougal}). For the reader's convenience, we report here the list, up to conjugacy. 

\begin{lemma}
\label{maximal_subgroups_psl2}
The maximal subgroups of $S$ are the following, up to conjugacy:

\begin{itemize}
    \item[$\diamond$] A subgroup $B$ of order $q(q-1)/d$ (the stabilizer of a $1$-space).
    \item[$\diamond$] A dihedral group of order $2(q-1)/d$, for $q$ even or $q\geq 13$.
   \item[$\diamond$] A dihedral group of order $2(q+1)/d$, for $q$ even or $q\neq 7,9$.
    \item[$\diamond$] $\PSL_2(q_0)$ where $q=q_0^r$, $q_0\neq 2$ and $r$ is an odd prime.
    \item[$\diamond$] $\mathrm{PGL}_2(q_0)$ where $q=q_0^2$ and $q_0\neq 2$ (two classes if $q$ is odd).
    \item[$\diamond$] $A_4$ for $q=p\equiv \pm3, 5, \pm 13 \pmod{40}$; $S_4$ for $q=p\equiv \pm 1\pmod 8$ (two classes); $A_5$ for either $q=p\equiv \pm 1 \pmod{10}$, or $q=p^2$ and $p\equiv \pm 3 \pmod{10}$ (two classes).
    % Subgroups isomorphic to $A_4$, $S_4$, $A_5$ (at most four classes).
\end{itemize}
\end{lemma}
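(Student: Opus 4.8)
The plan is to first classify \emph{all} subgroups of $S=\PSL_2(q)$ (this is Dickson's theorem in its full strength) and then extract the maximal ones by checking containments. Write $q=p^f$ and let $S$ act on the projective line $\Om=\P^1(\F_q)$; this is a faithful $2$-transitive action on $q+1$ points whose point stabilizer is the Borel subgroup $B$ of order $q(q-1)/d$. Every nontrivial element of $S$ is of exactly one of three types according to its action on $\Om$: unipotent (order $p$, one fixed point), split semisimple (order dividing $(q-1)/d$, two fixed points), or nonsplit semisimple (order dividing $(q+1)/d$, no fixed points, with eigenvalues in $\F_{q^2}$). The natural dichotomy for a subgroup $H\leq S$ is whether $p$ divides $|H|$.

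Suppose first that $p\nmid|H|$. Then $H$ contains no unipotent elements, so it embeds in $\mathrm{PGL}_2(\overline{\F}_p)=\Aut(\P^1)$ as a finite group of order coprime to the characteristic. By the classical classification of finite subgroups of $\mathrm{PGL}_2$ in coprime characteristic, $H$ is cyclic, dihedral, or isomorphic to one of $A_4$, $S_4$, $A_5$. The cyclic and dihedral groups arise inside the subgroups and normalizers of the two maximal tori, of orders $(q\pm1)/d$; the existence of $A_4$, $S_4$, $A_5$ is governed by whether the relevant roots of unity and square roots (of $-1$, $2$, $5$) lie in $\F_q$, which is exactly what the stated congruences on $p$ and $f$ encode.

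Now suppose $p\mid|H|$, so that $H$ contains a unipotent element fixing a point, say $\infty$. If $H$ fixes $\infty$ then $H\leq B$, and writing $B=U\rtimes T$ with $U$ the unipotent radical (elementary abelian of order $q$) and $T$ cyclic of order $(q-1)/d$, one obtains $H=(H\cap U)\rtimes C$ with $C$ cyclic; these are the ``Borel-type'' subgroups, all contained in the maximal subgroup $B$. The essential case is when $H$ fixes no point: then $H$ contains two distinct Sylow $p$-subgroups, and by analysing the subgroup generated by a unipotent element together with a suitable torus element one shows, via a field-of-definition argument (the matrix entries that occur generate a subfield $\F_{q_0}\leq\F_q$), that $H$ is conjugate to a subfield subgroup $\PSL_2(q_0)$ or $\mathrm{PGL}_2(q_0)$. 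Proving that a $p$-divisible subgroup fixing no point is forced to be a subfield subgroup is the technical heart of the theorem and the main obstacle; everything else is careful bookkeeping.

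It remains to read off the maximal subgroups from this list. The Borel $B$ is maximal by primitivity of the action. The torus normalizers give the dihedral groups of orders $2(q\pm1)/d$; these are maximal except in the small cases where they are absorbed into an exceptional subgroup---hence the conditions $q$ even or $q\geq 13$ for the $(q-1)$ type and $q$ even or $q\neq 7,9$ for the $(q+1)$ type. For subfield subgroups with $q=q_0^r$ and $r$ prime, $\PSL_2(q_0)$ is maximal exactly when $r$ is an odd prime (for $r=2$ one has the proper chain $\PSL_2(q_0)<\mathrm{PGL}_2(q_0)<S$, so instead $\mathrm{PGL}_2(q_0)$ with $q=q_0^2$ is maximal, splitting into two $S$-classes fused by the diagonal automorphism when $q$ is odd). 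Finally, one checks for each of $A_4$, $S_4$, $A_5$ precisely when it occurs and is not contained in a larger subfield or exceptional subgroup; this produces the displayed congruence conditions and the indicated numbers of conjugacy classes, completing the classification.
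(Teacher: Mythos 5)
The paper does not actually prove this lemma: it is Dickson's classification of the maximal subgroups of $\PSL_2(q)$, and the paper simply quotes it from Dickson \cite{dicksonold} and \cite[Table 8.1]{bray_holt_roneydougal}. Your proposal therefore attempts strictly more than the paper does, namely a proof of Dickson's theorem itself, and while your outline follows the standard route (dichotomy on whether $p$ divides $|H|$, the classical list of $p'$-subgroups of $\mathrm{PGL}_2$ over $\thickbar{\F}_p$, Borel-type subgroups, subfield subgroups), it has genuine gaps at exactly the points that carry the content.

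First, the key claim of your dichotomy is not only unproved but false as stated: it is not true that a subgroup of order divisible by $p$ fixing no point of the projective line must be a subfield subgroup. In characteristic $2$, a dihedral subgroup of order $2(q\pm 1)$ contains involutions, which are unipotent, and fixes no point of $\P^1(\F_q)$, yet it is not of the form $\PSL_2(q_0)$ or $\mathrm{PGL}_2(q_0)$; likewise $A_5\leq \PSL_2(9)$ has order divisible by $p=3$, fixes no point (a point stabilizer has order $36<60$), and is not a subfield subgroup of $\PSL_2(9)$ (those are $\PSL_2(3)\cong A_4$ and $\mathrm{PGL}_2(3)\cong S_4$). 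So the correct conclusion of this case must also allow dihedral groups and $A_4$, $S_4$, $A_5$ in small characteristics, and the field-of-definition argument you explicitly defer as ``the technical heart'' is precisely what must be written out to see which groups survive; deferring it leaves the proof incomplete at its central step. Second, the passage from the full subgroup lattice to the stated maximal list is pure assertion: the congruences modulo $40$, $8$ and $10$, the exceptions ``$q$ even or $q\geq 13$'' (e.g.\ for $q=11$ the dihedral group of order $q-1=10$ lies inside a maximal $A_5$) and ``$q\neq 7,9$'', and the class counts (two classes of $S_4$, of $\mathrm{PGL}_2(q_0)$ for $q$ odd, etc.) all require a containment and fusion analysis that you only name as ``bookkeeping''. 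For the purposes of this paper a citation suffices, and that is what the author does; if you want a self-contained proof, the two items above are the actual work.
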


For completeness, we have reported the exact conditions on $q$ under which the relevant items yield maximal subgroups of $S$, although these are largely irrelevant for our purposes. For instance, the conditions on $q$ in the last item can safely be ignored (except possibly for the purpose of Remark \ref{rmk: graph_lambda}).

%The last item is not precise (it does not state for which values of $q$ the relevant groups yield maximal subgroups of $S$), but it will be more than sufficient for our purposes.

We recall other well known facts.

\begin{lemma}
\label{lemma_conjugacy_classes_PSL}
The following hold.

\begin{enumerate}
    \item $S$ contains a unique conjugacy class of involutions and, for $p$ odd, two conjugacy classes of elements of order $p$.
    \item Let $\ell\geq 3$ be a divisor of  $(q\pm 1)/d$. There are $\phi(\ell)/2$ conjugacy classes of elements of order $\ell$ in $S$, where $\phi$ is Euler's totient function.
    \item The number of conjugacy classes of $S$ is $(q+4d-3)/d$.
\end{enumerate}
\end{lemma}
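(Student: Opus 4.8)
The plan is to classify the conjugacy classes of $\SL_2(q)$ by their eigenvalue data and then descend to $S=\SL_2(q)/Z$, where $Z=\{\pm I\}$ has order $d$. Up to conjugacy, a nonidentity element of $\SL_2(q)$ is of one of four types: central ($-I$, when $p$ is odd); unipotent, i.e.\ a single nontrivial Jordan block with eigenvalue $1$ (and its negatives, eigenvalue $-1$); split semisimple, diagonalizable over $\F_q$ with eigenvalues $\{\lambda,\lambda^{-1}\}$ for $\lambda\in \F_q^*\setminus\{\pm 1\}$; or nonsplit semisimple, with eigenvalues the Galois pair $\{\mu,\mu^q\}$ for $\mu\in \F_{q^2}^*$ of norm $\mu^{q+1}=1$ and $\mu\neq\pm 1$. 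For $p$ odd the unipotent elements of a fixed eigenvalue split into two $\SL_2(q)$-classes according to whether the transvection parameter is a square; for $p=2$ there is a single class. Counting the parameters $\{\lambda,\lambda^{-1}\}$ and $\{\mu,\mu^q\}$ yields the familiar totals of $N=q+4$ classes for $q$ odd and $N=q+1$ for $q$ even.

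To pass to $S$, I would use that the projection $\pi$ induces a surjection on conjugacy classes whose fibres are exactly the orbits of the involution $\sigma\colon g^{\SL_2(q)}\mapsto (-g)^{\SL_2(q)}$: indeed $\pi(g)$ and $\pi(h)$ are $S$-conjugate if and only if $g\sim \pm h$ in $\SL_2(q)$. Hence the number of classes of $S$ equals the number of $\sigma$-orbits, namely $(N+F)/2$, where $F$ is the number of classes fixed by $\sigma$. A class is fixed precisely when $g\sim -g$; running through the four types, this forces $\lambda^2=-1$ in the split case (possible iff $q\equiv 1 \pmod{4}$) or $\mu^2=-1$ in the nonsplit case (possible iff $q\equiv 3\pmod{4}$), and in either case there is exactly one such class, while the central and unipotent classes are never fixed. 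Thus $F=1$ for $q$ odd, giving $(q+4+1)/2=(q+5)/2=(q+4d-3)/d$ classes; for $q$ even, $S=\SL_2(q)$ and the total is $q+1=(q+4d-3)/d$. This proves (3).

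For (1), the same bookkeeping isolates the distinguished classes. The unique $\sigma$-fixed class consists of elements $g$ with $g^2=-I$ (order $4$, eigenvalues $\pm i$, $i^2=-1$), and its image is the unique class of involutions of $S$: any involution of $S$ lifts to such a $g$, since $g^2=I$ with $g\notin Z$ is impossible in $\SL_2(q)$ (the eigenvalues would be $\pm 1$ with product $1$, forcing $g=\pm I$). For $p$ odd, the two $\SL_2(q)$-classes of unipotent elements have eigenvalue $1$, so $-u$ (eigenvalue $-1$) is never conjugate to $u$; the two classes therefore remain distinct in $S$, giving two classes of elements of order $p$. For $p=2$ the unipotent elements are exactly the involutions, a single class.

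For (2), I would argue directly inside $S$ using its maximal tori: the split and nonsplit tori are cyclic of orders $(q-1)/d$ and $(q+1)/d$, each self-centralizing, with $N_S(T)/T$ the order-$2$ Weyl group acting by inversion. An element of order $\ell\geq 3$ is regular semisimple (its eigenvalues are distinct, and $\ell$ is coprime to $p$ since $\ell\mid (q\pm 1)/d$), so $C_S$ of it equals the ambient torus, and two torus elements of order $\ell$ are $S$-conjugate exactly when they are equal or mutually inverse. As $\ell\geq 3$ rules out $x=x^{-1}$, inversion is fixed-point-free on the $\phi(\ell)$ elements of order $\ell$, yielding $\phi(\ell)/2$ classes; since $(q-1)/d$ and $(q+1)/d$ are coprime, each such $\ell$ divides at most one of them, so no double counting occurs. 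The main obstacle is the fusion bookkeeping under $\sigma$, in particular pinning down that exactly one semisimple class satisfies $g\sim -g$ with the split/nonsplit dichotomy governed by $q\bmod 4$; everything else is a routine parameter count together with the standard fact that regular semisimple elements are self-centralizing.
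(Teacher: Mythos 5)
Your proof is correct, but for parts (1) and (3) it takes a genuinely different route from the paper. The paper works entirely inside $S$: it counts involutions explicitly ($q(q+\varepsilon)/2$ of them, matching the conjugates of a dihedral subgroup), determines the classes of order-$p$ elements by a fusion computation in a Borel subgroup ($x^S\cap P=x^B$, with conjugation by $B$ scaling the transvection parameter by squares), and then deduces (3) from (1) and (2) via the divisor-sum identity $\sum_{\ell\mid (q\pm1)/d}\phi(\ell)/2=(q\pm1)/2d$. You instead start from the standard classification of conjugacy classes of $\SL_2(q)$ ($q+4$ classes for $q$ odd, $q+1$ for $q$ even) and descend through the central quotient, observing that classes of $S$ correspond to orbits of the involution $\sigma\colon g^{\SL_2(q)}\mapsto(-g)^{\SL_2(q)}$, so the class number is $(N+F)/2$; your verification that $F=1$ for $q$ odd (exactly one class with $g\sim -g$, namely $g^2=-I$, split or nonsplit according to $q\bmod 4$) is the key fusion check, and it simultaneously yields (3) without using (2) and gives a clean structural explanation of the unique involution class in (1) (every involution of $S$ lifts to an element with $g^2=-I$). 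Your part (2) is essentially the paper's argument in different clothing: the paper's statement that $\N_S(\gen x)$ is dihedral of order $2(q\pm1)/d$, giving $x^S\cap\gen x=\{x,x^{-1}\}$, is exactly your claim that the cyclic maximal tori are self-centralizing with Weyl group of order $2$ acting by inversion (and you correctly flag, and dispose of, the one subtlety in $\PSL_2$: a conjugating element with $g^s=-g$ would force $\pi(g)$ to be an involution, which $\ell\geq 3$ excludes). The trade-off: the paper's proof is self-contained in $S$ and makes (3) an immediate corollary of (1) and (2), while your descent argument imports the $\SL_2(q)$ class list (rational canonical form) as input but produces the fusion data $g\mapsto -g$ explicitly as a by-product, so each approach is a legitimate sketch at the level of detail the paper itself adopts.
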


\begin{proof}
We sketch a proof. (1) Assume first $q$ is odd, and let us deal with involutions. Let $\varepsilon=1$ if $q\equiv 1\pmod 4$, and $\varepsilon=-1$ otherwise. By explicit matrix computation, we find that the number of involutions of $S$ is $q(q+\varepsilon)/2$. This coincides with the number of conjugates of a dihedral subgroup of order $q-\varepsilon$, so we deduce that all involutions of $S$ are conjugate. Next we deal with elements of order $p$ (including the case $q$ even). The image in $S$ of the subgroup of $\SL_2(q)$ consisting of the upper unitriangular matrices is a Sylow $p$-subgroup $P$ of $S$. This is contained in a subgroup $B$, consisting of the image of the upper triangular matrices of $\SL_2(q)$. Let $1\neq x\in P$. We verify that $x^S\cap P=x^B$, and we compute that conjugating $x$ by elements of $B$ can only multiply the upper-right entry by every nonzero square of $\mathbf F_q$. This proves (1).

(2) %The involutions in $S$ are all conjugate, and when $p$ is odd, there are two conjugacy classes of elements of order $p$. This proves (1).
Let $\ell\geq 3$ be a divisor of  $(q\pm 1)/d$. All cyclic subgroups of order $\ell$ are conjugate in $S$. Assume $x\in S$ has order $\ell$. We have that $\N_S(\gen x)$ is dihedral of order $2(q\pm 1)/d$, from which $x^S\cap \gen x=\{x,x^{-1}\}$. This proves (2).

(3) An element of $S$ has either order $p$, or order dividing $(q\pm 1)/d$. Then (3) follows from (1), (2), and the formula
\[
\sum_{\ell \mid (q\pm 1)/d} \frac{\phi(\ell)}{2}=\frac{q\pm 1}{2d}.
\]
The statement is proved.
\end{proof}

The following lemma represents the main observation regarding $S=\PSL_2(q)$.

\begin{lemma}
\label{lemma_bipartite}
The graph $\Lambda^+(S)$ is bipartite.
\end{lemma}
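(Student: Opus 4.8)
The plan is to produce a normal $2$-covering of $S$ and then argue, on purely combinatorial grounds, that any finite group admitting such a covering has bipartite $\Lambda^+$. Recall that a $2$-covering here means a pair of proper subgroups $H_1, H_2 < S$ such that every element of $S$ is conjugate into $H_1$ or into $H_2$.

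First I would exhibit the covering explicitly. Take $H_1=B$, the Borel subgroup of order $q(q-1)/d$, and $H_2$ the dihedral subgroup of order $2(q+1)/d$ from Lemma \ref{maximal_subgroups_psl2}. Every nontrivial element of $S$ is either unipotent (of order $p$), or semisimple of order dividing $(q-1)/d$ (``split''), or semisimple of order dividing $(q+1)/d$ (``nonsplit''); this is the structural input underlying Lemma \ref{lemma_conjugacy_classes_PSL}. The unipotent elements and the split semisimple elements are conjugate into $B$ (the former into its unipotent radical of order $q$, the latter into its cyclic torus of order $(q-1)/d$), while the nonsplit semisimple elements are conjugate into the cyclic subgroup of order $(q+1)/d$ contained in $H_2$. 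Hence every element of $S$ is conjugate into $H_1$ or $H_2$, so $\{H_1,H_2\}$ is a $2$-covering.

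The combinatorial core is an elementary observation: if two nontrivial classes $C$ and $D$ each meet a common proper subgroup $H$ (that is, $C\cap H\neq\varnothing$ and $D\cap H\neq\varnothing$), then $C$ and $D$ are nonadjacent in $\Lambda(S)$, since choosing $x\in C\cap H$ and $y\in D\cap H$ gives $\gen{x,y}\leq H<S$, whence $\gen{C,D}_I\neq S$. Applying this with $H\in\{H_1,H_2\}$ produces the desired $2$-coloring: color a class by $1$ if it meets $H_1$ and by $2$ if it meets $H_2$, so that, by the covering, every class receives at least one color. The observation shows that two classes of the same color are never adjacent, so every edge of $\Lambda(S)$ joins a class meeting $H_1$ to a class meeting $H_2$. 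A class meeting \emph{both} subgroups is therefore nonadjacent to every other class (any purported neighbour meets $H_1$ or $H_2$, giving a monochromatic pair), hence is isolated and is deleted when passing to $\Lambda^+(S)$. Thus in $\Lambda^+(S)$ the two color classes are disjoint and partition the vertices, with all edges crossing between them, and bipartiteness follows.

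I expect the only genuine work to lie in the first step, and even there the content is standard: that every element of $\PSL_2(q)$ is unipotent, split, or nonsplit, and that these families are absorbed by $B$ and by the dihedral normalizer of the nonsplit torus, is already encoded in Lemmas \ref{maximal_subgroups_psl2} and \ref{lemma_conjugacy_classes_PSL}. Everything after the covering is formal. The one point meriting care is the verification that a class meeting both $H_1$ and $H_2$ is genuinely isolated, so that it vanishes in $\Lambda^+(S)$; this is precisely what makes the naive $2$-coloring well defined and is what the $\Lambda^+$ (rather than $\Lambda$) in the statement accounts for.
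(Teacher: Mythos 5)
Your proof is correct and follows essentially the same route as the paper: both use the $2$-covering of $S$ by the Borel subgroup and the dihedral subgroup of order $2(q+1)/d$, observe that classes meeting a common one of these subgroups cannot be adjacent, and note that classes meeting both are isolated and hence absent from $\Lambda^+(S)$. The only difference is that you sketch a proof of the $2$-covering via the unipotent/split/nonsplit trichotomy, whereas the paper simply cites it as well known.
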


\begin{proof}
Let $H$ be a dihedral subgroup of $S$ of order $2(q+1)/d$, and $K$ be a Borel subgroup of order $q(q-1)/d$. It is well known (see e.g. \cite{BubboLucid}) that $\{H,K\}$ is a $2$-covering of $S$, that is,
\[
S=\bigcup_{g\in S} H^g \cup \bigcup_{g\in S} K^g.
\]
Write for convenience $\widetilde H=\cup_{g\in S}H^g$ and $\widetilde K=\cup_{g\in S}K^g$. A conjugacy class contained in $\widetilde H \cap \widetilde K$ is isolated in $\Lambda(S)$, and a class contained in $\widetilde H \setminus \widetilde K$ can be adjacent in $\Lambda^+(S)$ only to a class contained in $\widetilde K \setminus \widetilde H$. This gives a partition of $\Lambda^+(S)$ into two parts. %(Here, ``intersecting'' really means ``having \textit{non-empty} intersection with''.)
\end{proof}
%Clearly, the argument of the previous proof is general; see Section \ref{section_comments} for further comments.

We refer to Subsection \ref{subsection_bipartite} for a discussion on the topic of $2$-coverings of finite simple groups.

\begin{remark}
\label{rmk: graph_lambda}
Using Lemmas \ref{maximal_subgroups_psl2} and \ref{lemma_conjugacy_classes_PSL}, it is not hard to show that $\Lambda^+(S)$ is connected with diameter at most $3$. Now, let us briefly comment on the graph $\Lambda(S)$. How does it differ from $\Lambda^+(S)$? It is straightforward to prove the following:

\begin{itemize}
    \item[(i)] If $q=7$, then $\Lambda(S)$ has $1$ isolated vertex: the class of elements of order $3$.
    \item[(ii)] If $q=9$, then $\Lambda(S)$ has $3$ isolated vertices: the two classes of elements order $3$, and the class of involutions.
    \item[(iii)] If $q\neq 9$, and $q$ is even or $q\equiv 1 \pmod 4$ or $q\neq p$, then $\Lambda(S)$ has $1$ isolated vertex: the class of involutions. 
    \item[(iv)] If $q\neq 7$ and $q=p\equiv 3 \pmod 4$, then $\Lambda(S)$ has no isolated vertices, therefore $\Lambda(S)=\Lambda^+(S)$.
    \end{itemize}

In particular, if $q\neq 9$ then $\Lambda(S)$ has at most one isolated vertex. This is in contrast to the case of alternating or symmetric groups $G$, in which case $\Lambda(G)$ can have an arbitrarily large number of isolated vertices (see \cite[Theorem 1.2]{Gar}).
\end{remark}

Let $\mathscr P_1$ and $\mathscr P_2$ be the parts of $\Lambda^+(S)$ given in the proof of Lemma \ref{lemma_bipartite}. We note that, for every conjugacy class $C$ of $S$ and for every $\sigma \in \Aut(S)$, $\{C,C^\sigma\}$ does not invariably generate $S$. (A way to see this is that the sets $\widetilde H$ and $\widetilde K$ from the proof of Lemma \ref{lemma_bipartite} are preserved by every automorphism of $S$.) In particular, for every $(C_1, C_2) \in \Psi_2(S)$, $(C_1, C_2)$ and $(C_2, C_1)$ belong to different $\Aut(S)$-orbits. We also note that the parts $\mathscr P_1$ and $\mathscr P_2$ are invariant under the action of $\Aut(S)$. We deduce the following

\begin{lemma}
\label{delta_even}
$\beta=\beta(S)$ is even, and for each vertex $C=C_1\times \cdots \times C_\beta$ of $\Lambda^+(S^\beta)$, there exists a subset $\Omega=\Omega(C)$ of $\{1, \ldots, \beta\}$ of size $\beta/2$ such that for every $i \in \Omega$, $C_i\in \mathscr P_1$, and for every $i \not\in \Omega$, $C_i\in \mathscr P_2$.
\end{lemma}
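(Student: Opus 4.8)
The plan is to deduce Lemma~\ref{delta_even} directly from the bipartite structure of $\Lambda^+(S)$ established in Lemma~\ref{lemma_bipartite}, together with the combinatorial description of $\Lambda^+(S^\beta)$ via Lemma~\ref{lemma_elementary_criterion}. The starting observation is that every vertex of $\Lambda^+(S^\beta)$ is a nonisolated vertex, hence it appears in some arc of the graph. By the description preceding Lemma~\ref{delta_even}, such an arc is obtained from a $2\times\beta$ matrix whose columns form a set of representatives for the $\Aut(S)$-orbits on $\Psi_2(S)$. So I would first record that for every vertex $C=C_1\times\cdots\times C_\beta$ there is a companion row $D=D_1\times\cdots\times D_\beta$ with each column $(C_i,D_i)\in\Psi_2(S)$, and the columns pairwise inequivalent under $\Aut(S)$.

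Next I would use the bipartiteness. Each pair $(C_i,D_i)\in\Psi_2(S)$ is an edge of $\Lambda^+(S)$, so by Lemma~\ref{lemma_bipartite} exactly one of $C_i,D_i$ lies in $\mathscr P_1$ and the other in $\mathscr P_2$. Define $\Omega=\Omega(C)=\{i: C_i\in\mathscr P_1\}$; then automatically $C_i\in\mathscr P_2$ for $i\notin\Omega$, which is the desired partition condition once the size of $\Omega$ is pinned down. The entire content of the lemma therefore reduces to the claim that $|\Omega|=\beta/2$, independent of which vertex $C$ we chose, and in particular that $\beta$ is even.

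To establish $|\Omega|=\beta/2$ I would argue via a counting or involution argument on the orbit representatives. The remark preceding the lemma records the key fact: for every $(C_1,C_2)\in\Psi_2(S)$, the swapped pair $(C_2,C_1)$ lies in a \emph{different} $\Aut(S)$-orbit, because $\widetilde H$ and $\widetilde K$ are $\Aut(S)$-invariant and so the parts $\mathscr P_1,\mathscr P_2$ are permuted trivially (not swapped) by $\Aut(S)$. Hence the map $(C_1,C_2)\mapsto(C_2,C_1)$ descends to a \emph{fixed-point-free} involution $\iota$ on the set of $\Aut(S)$-orbits on $\Psi_2(S)$, a set of size exactly $\beta$. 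This immediately gives that $\beta$ is even. Moreover $\iota$ swaps the orbits whose first coordinate lies in $\mathscr P_1$ with those whose first coordinate lies in $\mathscr P_2$, so these two collections of orbit representatives have equal size $\beta/2$. Since the columns of the defining matrix run through a full set of orbit representatives exactly once, precisely $\beta/2$ columns have their $C$-entry (top entry) in $\mathscr P_1$, giving $|\Omega|=\beta/2$.

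The main obstacle is the subtle point that $\Omega$ must be shown well-defined for \emph{every} vertex $C$, not merely for the specific one built from a chosen set of orbit representatives; different arcs incident to $C$ could a priori use different column orderings or different partners $D$. I would resolve this by noting that the value of $|\Omega(C)|$ depends only on how many coordinates $C_i$ lie in $\mathscr P_1$, which is an intrinsic property of the vertex $C$ itself and does not reference $D$ at all. What the arc-existence guarantees is merely that each column $(C_i,D_i)$ is a genuine element of $\Psi_2(S)$ lying on an edge of the bipartite graph $\Lambda^+(S)$, forcing $C_i\in\mathscr P_1\cup\mathscr P_2$; and since the multiset of columns realizes each orbit exactly once, the count of $\mathscr P_1$-entries in the top row is forced to be $\beta/2$ by the fixed-point-free involution $\iota$ regardless of the chosen representatives. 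Thus $|\Omega(C)|=\beta/2$ holds uniformly, completing the proof.
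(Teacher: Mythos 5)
Your proposal is correct and follows essentially the same route as the paper, which states this lemma without a written proof precisely because it is meant to be deduced from the two facts recorded just before it: the swap $(C_1,C_2)\mapsto(C_2,C_1)$ induces a fixed-point-free involution on the $\beta$ many $\Aut(S)$-orbits of $\Psi_2(S)$ (giving $\beta$ even and exactly $\beta/2$ orbits of each ``type''), and the parts $\mathscr P_1,\mathscr P_2$ are $\Aut(S)$-invariant, so that via Lemma~\ref{lemma_elementary_criterion} any edge through a vertex $C$ forces exactly $\beta/2$ of its coordinates into $\mathscr P_1$. Your additional care about well-definedness of $|\Omega(C)|$ across different arcs through $C$ is a worthwhile point that the paper glosses over, and you resolve it correctly.
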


We can finally prove the key result.

\begin{theorem}
\label{theorem_key_psl}
The graph $\Lambda^+(S^\beta)$ has at least $\frac{1}{2}\cdot \binom{\beta}{\beta/2}$ connected components.
\end{theorem}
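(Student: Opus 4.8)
The plan is to use the combinatorial structure provided by Lemma~\ref{delta_even} to build a coloring of the vertices of $\Lambda^+(S^\beta)$ that is invariant under adjacency, so that the number of color classes lower-bounds the number of connected components. First I would recall that, by Lemma~\ref{delta_even}, every vertex $C=C_1\times\cdots\times C_\beta$ determines a subset $\Omega(C)\subseteq\{1,\ldots,\beta\}$ of size $\beta/2$, recording the positions $i$ for which $C_i\in\mathscr P_1$ (and the complement records the positions landing in $\mathscr P_2$). There are exactly $\binom{\beta}{\beta/2}$ such subsets, so $\Omega$ partitions the vertex set into $\binom{\beta}{\beta/2}$ classes.

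The key step is to understand how $\Omega$ transforms along an edge. Suppose $C$ and $D$ are adjacent, so by Lemma~\ref{lemma_elementary_criterion} each column $(C_i,D_i)$ lies in $\Psi_2(S)$. Since $\Lambda^+(S)$ is bipartite with parts $\mathscr P_1,\mathscr P_2$ (Lemma~\ref{lemma_bipartite}), any invariably generating pair $\{C_i,D_i\}$ must have one entry in $\mathscr P_1$ and the other in $\mathscr P_2$. Hence for each coordinate $i$, exactly one of $C_i\in\mathscr P_1$, $D_i\in\mathscr P_1$ holds: that is, $i\in\Omega(C)$ if and only if $i\notin\Omega(D)$. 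This shows $\Omega(D)=\{1,\ldots,\beta\}\setminus\Omega(C)$, i.e.\ adjacency forces the two sets to be exact complements. I would then define an unordered pair $\{\Omega(C),\Omega(C)^c\}$ as an invariant of each vertex; along any edge this unordered pair is preserved, hence it is constant on each connected component.

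Finally I would count. Because complementation is a fixed-point-free involution on the $\binom{\beta}{\beta/2}$ subsets of size $\beta/2$ (a set of size $\beta/2$ is never equal to its complement), the unordered pairs $\{\Omega,\Omega^c\}$ number exactly $\tfrac12\binom{\beta}{\beta/2}$. Since this invariant is constant on connected components and takes all $\tfrac12\binom{\beta}{\beta/2}$ values on the vertex set of $\Lambda^+(S^\beta)$ — each value being realized because one may explicitly build a vertex with any prescribed $\Omega$ via the arc construction described after Lemma~\ref{lemma_chain_inequalities} — the graph has at least $\tfrac12\binom{\beta}{\beta/2}$ connected components, as claimed.

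The main obstacle I anticipate is the verification that each of the $\tfrac12\binom{\beta}{\beta/2}$ possible values of the invariant is actually attained by some vertex of $\Lambda^+(S^\beta)$ (so that the lower bound is not vacuous on account of unrealized color classes). This requires producing, for each prescribed $\Omega$, a genuine vertex of $\Lambda^+(S^\beta)$ — that is, a conjugacy class lying on at least one arc — with $\mathscr P_1$-coordinates exactly on $\Omega$; this should follow from the arc description via a matrix whose columns are orbit representatives in $\Psi_2(S)$, after permuting columns and swapping the two rows on the positions indexed by $\Omega$, but it is the one point where the bipartite combinatorics must be matched against the explicit orbit structure rather than used purely formally.
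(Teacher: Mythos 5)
Your proposal is correct and is essentially the paper's own proof: the paper likewise uses Lemma \ref{delta_even} to deduce that adjacent vertices carry complementary sets $\Omega$, so that the unordered pair $\{\Omega(C),\{1,\ldots,\beta\}\setminus\Omega(C)\}$ is constant on each connected component, yielding at least $\frac{1}{2}\binom{\beta}{\beta/2}$ components. Your attention to the realizability of every $\beta/2$-subset $\Omega$ (a point the paper leaves implicit) is well placed, and the cleanest way to settle it is via symmetry rather than re-arranging orbit representatives by hand: since $\mathrm{Sym}(\beta)$ permutes the coordinates and acts by automorphisms on $\Lambda^+(S^\beta)$, any single non-isolated vertex (which exists because $S^\beta$ is invariably $2$-generated, by the definition of $\beta$) can be carried by a coordinate permutation to a vertex whose set $\Omega$ is any prescribed $\beta/2$-subset.
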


\begin{proof}
For a vertex $C=C_1\times \cdots \times C_\beta$ of $\Lambda^+(S)$, let $\Omega(C)$ be the set from Lemma \ref{delta_even}. Then, $C$ can be adjacent only to vertices $D$ such that $\Omega(D)=\{1, \ldots, \beta\}\setminus \Omega(C)$. In particular, the number of connected components of $\Lambda^+(S^\beta)$ is at least half the number of $\beta/2$-subsets of $\{1, \ldots, \beta\}$, which proves the statement.
\end{proof}

It is not difficult to establish that $\beta(S)$ tends to infinity as $|S|\rightarrow \infty$ (that is, $q\rightarrow \infty$), thereby proving Theorem \ref{main_theorem}. In the next subsection we will obtain a better estimate for $\beta(S)$.%, as this may be of interest.

\subsection{Bounds} We want to estimate $\beta(S)$, where $S=\PSL_2(q)$. We will find the asymptotic behaviour of $|\Psi_2(S)|$, and then apply Lemma \ref{lemma_chain_inequalities}. %Estimating $|\Psi_2(S)|$ is not difficult, given that the subgroup structure of $S$ is very well understood.
%One can decide to obtain essentially any level of accuracy; we content ourselves with the following result. 
In the following, $f=O(g)$ means that $|f|\leq Cg$ for some constant $C$ (so $f$ might also be negative). % (which we phrase in probabilistic language).

\begin{theorem}
\label{theorem_estimate_is}
Let $S=\emph{PSL}_2(q)$ and $d=(2,q-1)$. We have
\[
|\Psi_2(S)|=\frac{q^2}{2d^2}+O(q).
\]
(For $q$ odd the first term of the expression is not an integer, but still the statement makes sense.)
\end{theorem}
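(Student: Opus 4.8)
The plan is to count $|\Psi_2(S)|$ directly by enumerating which pairs of conjugacy classes invariably generate $S=\PSL_2(q)$. The starting point is the $2$-covering structure from Lemma \ref{lemma_bipartite}: writing $\widetilde H$ and $\widetilde K$ for the union of conjugates of a dihedral subgroup of order $2(q+1)/d$ and of a Borel subgroup of order $q(q-1)/d$ respectively, we know $S=\widetilde H \cup \widetilde K$. Since $\Lambda^+(S)$ is bipartite with parts $\mathscr P_1 \subseteq \widetilde H \setminus \widetilde K$ and $\mathscr P_2 \subseteq \widetilde K \setminus \widetilde H$, every pair in $\Psi_2(S)$ must consist of one class from each part. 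So I would first count the classes in each part, then, for a fixed class $C_1$ in one part, count how many classes $C_2$ in the other part actually invariably generate (as opposed to merely lying in different parts).

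First I would tabulate the conjugacy classes using Lemma \ref{lemma_conjugacy_classes_PSL}. The classes split into: the involution class and classes of order $p$ (at most two, by part (1)); the ``split torus'' classes of elements of order dividing $(q-1)/d$; and the ``nonsplit torus'' classes of elements of order dividing $(q+1)/d$. By Lemma \ref{lemma_conjugacy_classes_PSL}(2), the number of classes coming from the split torus is $\sum_{\ell \mid (q-1)/d, \ell \geq 3} \phi(\ell)/2 = (q-1)/(2d) - O(1)$, and likewise the nonsplit torus contributes $(q+1)/(2d) - O(1)$ classes. The point is that each class sits in a well-understood maximal subgroup: split torus elements lie in $\widetilde K$ (they are regular semisimple in a Borel), nonsplit torus elements lie in $\widetilde H$, and the $p$-elements and involutions are the borderline cases that either end up isolated or in one fixed part. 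This gives $|\mathscr P_1|$ and $|\mathscr P_2|$, each of size $q/(2d) + O(1)$.

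Next I would determine adjacency within the bipartite structure. A pair $(C_1, C_2)$ with $C_1$ a nonsplit-torus class and $C_2$ a split-torus class fails to invariably generate precisely when every choice of representatives lies in a common proper subgroup; using the maximal subgroup list (Lemma \ref{maximal_subgroups_psl2}), the only proper subgroups containing both a nonsplit-torus element of order $\geq 3$ and a split-torus element of order $\geq 3$ are the ``small'' subfield subgroups $\PSL_2(q_0)$, $\mathrm{PGL}_2(q_0)$, and the subgroups $A_4, S_4, A_5$. Crucially, these exceptional subgroups account for only $O(q)$ of the possible pairs: a subfield subgroup $\PSL_2(q_0)$ with $q = q_0^r$ absorbs only classes whose element order divides $(q_0 \pm 1)$, a vanishingly small fraction, and the bounded subgroups $A_4,S_4,A_5$ constrain the element orders to lie in $\{1,2,3,4,5\}$. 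Hence for all but $O(q)$ of the roughly $|\mathscr P_1| \cdot |\mathscr P_2| = (q/(2d))^2 = q^2/(4d^2)$ cross-part pairs, the pair invariably generates. Since $\Psi_2(S)$ counts \emph{ordered} pairs, this doubles to $q^2/(2d^2) + O(q)$, giving the claimed estimate.

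The main obstacle I anticipate is the careful bookkeeping of the exceptional subgroups to confirm the error term is genuinely $O(q)$ and not larger. One must check that no single conjugacy class of $S$ is ``blocked'' from generating against a positive proportion of the opposite part — i.e., that the union of proper overgroups containing a fixed regular semisimple element $C_1$ meets only $O(1)$ classes of the opposite type, summing to $O(q)$ over all $C_1$. This requires knowing how many maximal subgroups contain a given semisimple element and how many opposite-type classes each such subgroup contains; the subfield subgroups are the delicate case, since there can be several conjugacy classes of them, but their element orders are constrained to divisors of $q_0 \pm 1 = O(\sqrt{q})$, so they contribute only $O(\sqrt q)$ per class and $O(q)$ overall. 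The bounded subgroups $A_4, S_4, A_5$ are trivially $O(1)$ per class. Assembling these bounds and verifying the leading constant $q^2/(2d^2)$ is exactly where the explicit knowledge of $\PSL_2(q)$ makes the argument routine but requires attention to the factor $d = (2, q-1)$ throughout.
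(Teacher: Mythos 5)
Your proposal is correct and follows essentially the same route as the paper's proof: discard the classes of elements of order $p$ or order at most $2$ (an $O(q)$ loss), observe that an invariably generating pair must consist of one split-torus class and one nonsplit-torus class, bound the exceptional cross pairs meeting a common subfield subgroup or $A_4$, $S_4$, $A_5$ by $O(q)$, and recover the main term $q^2/(2d^2)$ from the counts of classes of each type (the paper computes this same quantity as a divisor sum $\sum \phi(\ell_1)\phi(\ell_2)/4$). The only cosmetic difference is that the paper excludes $\mathrm{PGL}_2(q^{1/2})$ from the exceptional subgroups by a separate argument (its semisimple elements all lie in split tori of $S$, so it cannot block a cross pair), whereas you keep it and absorb its $O(q^{1/2})\cdot O(q^{1/2})=O(q)$ contribution into the error term; both suffice for the stated bound.
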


\begin{proof}
In this proof, when we say that a conjugacy class $C$ \textit{intersects} a subgroup $H$, we mean that $C\cap H\neq \varnothing$.

%In fact, also the conditions on $q$ in the second and third items are irrelevent, as our statement is asymptotic.)
We need to count the pairs of conjugacy classes $(C_1, C_2)$ which invariably generate $S$. We ignore the pairs where either $C_1$ or $C_2$ comprises elements of order $p$, or order at most $2$. By Lemma \ref{lemma_conjugacy_classes_PSL}, the number of these pairs is $O(q)$.

By this choice, up to swapping the indices, $C_1$ intersects a cyclic subgroup of order $(q-1)/d$, and $C_2$ intersects a cyclic subgroup of order $(q+1)/d$. Given $C_1$ and $C_2$ with this property, by Lemma \ref{maximal_subgroups_psl2}, we see that $C_1$ and $C_2$ invariably generate $S$ unless one of the following occurs:
\begin{itemize}
    \item[(i)] $C_1$ and $C_2$ intersect a subgroup isomorphic to $A_4$, $S_4$ or $A_5$, and there are at most four conjugacy classes of such subgroups.
    \item[(ii)] $C_1$ and $C_2$ intersect a maximal subgroup conjugate to $\text{PSL}_2(q^{1/r})$ where $r$ is an odd prime (and $q$ is an $r$-th power).
\end{itemize}
(Note that in (ii) we are excluding the subfield subgroups $\text{PGL}_2(q^{1/2})$. Indeed, any class of elements of $\text{PGL}_2(q^{1/2})$ of order prime to $q$ intersects a cyclic subgroup of $S$ order $(q-1)/d$; and this cannot occur for $C_2$.) 
Clearly there are $O(1)$ possibilities for $(C_1,C_2)$ satisfying (i). The number of conjugacy classes of $\PSL_2(q^{1/r})$ is $O(q^{1/r})$; therefore, for fixed $r$, the number of possibilities for the pair $(C_1, C_2)$ satisfying (ii) is $O(q^{2/r})$. Summing over the odd prime divisors of $\log_p q$, we see that there are $O(q^{2/3})$ possibilities in (ii), noting that there are at most $\log_2 \log_2 q$ possibilities for $r$.

%Now fix a divisor $\ell$ of $(q\pm 1)/d$, with $\ell \geq 3$. There are $\phi(d)/2$ conjugacy classes of elements of order $\ell$ in $S$, where $\phi$ is the Euler's totient function. (Indeed, all cyclic subgroups of $S$ of order $\ell$ are conjugate in $S$; and if $|x|=\ell$, we have that $\N_S(\gen x)$ is dihedral, therefore $x^S\cap \gen x =\{x,x^{-1}\}$).
Using Lemma \ref{lemma_conjugacy_classes_PSL}, we get the following formula for $|\Psi_2(S)|$ (the factor $2$ at the beginning comes from the fact that we may also have $C_1$ intersecting a cyclic subgroup of order $(q+1)/d$, and $C_2$ intersecting a cyclic subgroup of order $(q-1)/d$):
\begin{align*}
|\Psi_2(S)|&=2\cdot \sum_{\substack{\ell_1 \mid (q-1)/d \\ \ell_2 \mid (q+1)/d}}\frac{\phi(\ell_1)}{2}\frac{\phi(\ell_2)}{2} + O(q) \\ &= \frac{q^2-1}{2d^2}+O(q)
=\frac{q^2}{2d^2} + O(q).
\end{align*}
Here we used the formula $\sum_{\ell \mid n}\phi(\ell)=n$, and the fact that $\phi(\ell_1)\phi(\ell_2)=\phi(\ell_1 \ell_2)$ for coprime $\ell_1$ and $\ell_2$.
\end{proof}
We can rephrase Theorem \ref{theorem_estimate_is} in probabilistic language, that is, we can prove Theorem \ref{main_PSL2}.

\begin{proof}[Proof of Theorem \ref{main_PSL2}]
The statement follows from Lemma \ref{lemma_conjugacy_classes_PSL}(3) and Theorem \ref{theorem_estimate_is}.
\end{proof}

Finally we can estimate $\beta(S)$ and get a lower bound for the number of connected components of $\Lambda^+(S^\beta)$, thereby proving Theorem \ref{main_theorem}. For aesthetic reasons, we denote $2^a=\text{exp}_2\{a\}$, and the symbol $o(1)$ is understood with respect to the limit $q\rightarrow \infty$.

Let $N(S)$ denote the number of connected components of $\Lambda^+(S^\beta)$. 

\begin{theorem}
\label{theorem_lower_bound}
If $S=\emph{PSL}_2(q)$, then
\begin{align}
&q^{2-o(1)} \leq \beta(S) \leq \frac{q^2}{2}+O(q)  \\
&N(S) \geq \emph{exp}_2\left\{q^{2-o(1)}\right\}
\end{align}

\end{theorem}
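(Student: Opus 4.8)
The plan is to combine the count in Theorem~\ref{theorem_estimate_is} with the two-sided bound of Lemma~\ref{lemma_chain_inequalities} to locate $\beta(S)$, and then to feed the resulting estimate into Theorem~\ref{theorem_key_psl}. All of the genuine work has already been done in those two results, so the proof amounts to assembling them and keeping track of the error terms.

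For the upper bound in the first display I would simply invoke $\beta(S) \leq |\Psi_2(S)|$ from Lemma~\ref{lemma_chain_inequalities}, and note that $d=(2,q-1)\geq 1$ forces $|\Psi_2(S)| = q^2/(2d^2)+O(q) \leq q^2/2 + O(q)$ by Theorem~\ref{theorem_estimate_is}. For the lower bound I would use the other half of Lemma~\ref{lemma_chain_inequalities}, namely $\beta(S) \geq |\Psi_2(S)|/|\emph{Out}(S)|$. Writing $q=p^f$, one has $|\emph{Out}(S)| = df$ (see \cite[Tables 5.1.A--5.1.C]{kleidmanliebeck}), so $|\emph{Out}(S)| \leq 2\log_2 q$; combined with $|\Psi_2(S)| \geq q^2/8 + O(q)$ (using $d\leq 2$) this gives $\beta(S) \geq \frac{q^2}{16\log_2 q}(1+o(1))$. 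The point of the bookkeeping is that $\log_q(16\log_2 q) \to 0$ as $q\to\infty$, so a logarithmic denominator costs only a $q^{o(1)}$ factor and $q^2/\log_2 q = q^{2-o(1)}$; hence $\beta(S) \geq q^{2-o(1)}$.

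For the second display I would apply Theorem~\ref{theorem_key_psl}, which gives $N(S) \geq \tfrac12\binom{\beta}{\beta/2}$, together with the elementary estimate $\binom{\beta}{\beta/2} \geq 2^\beta/(\beta+1)$ coming from $2^\beta = \sum_k \binom{\beta}{k} \leq (\beta+1)\binom{\beta}{\beta/2}$. Taking logarithms yields $\log_2 N(S) \geq \beta - 1 - \log_2(\beta+1)$. Since $\beta = O(q^2)$ we have $\log_2(\beta+1) = O(\log q) = o(\beta)$, so the right-hand side is $\beta(1-o(1)) \geq q^{2-o(1)}$ by the first display, whence $N(S) \geq \emph{exp}_2\{q^{2-o(1)}\}$.

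There is no substantial obstacle, as the analytic and combinatorial content is supplied by Theorems~\ref{theorem_estimate_is} and~\ref{theorem_key_psl}; the only care required is the $o(1)$ bookkeeping in the exponent, i.e. checking that both the division by $|\emph{Out}(S)|$ in the lower bound for $\beta$ and the division by $(\beta+1)$ in the central-binomial estimate are absorbed into the $q^{-o(1)}$ (respectively $2^{-o(\beta)}$) error. This holds precisely because a logarithmic factor is $q^{o(1)}$.
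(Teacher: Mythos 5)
Your proposal is correct and follows essentially the same route as the paper: both displays are obtained by combining Lemma~\ref{lemma_chain_inequalities}, the bound $|\operatorname{Out}(S)|\leq 2\log_2 q$, Theorem~\ref{theorem_estimate_is}, and Theorem~\ref{theorem_key_psl}. The only cosmetic difference is that you bound the central binomial coefficient by the elementary inequality $\binom{\beta}{\beta/2}\geq 2^\beta/(\beta+1)$ where the paper uses Stirling's approximation; both give $N(S)\geq 2^{\beta(1-o(1))}$ and hence the same conclusion.
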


\begin{proof}
Item (1) follows from Lemma \ref{lemma_chain_inequalities}, the fact that $|\Out(S)|\leq 2\log_2 q$, and Theorem \ref{theorem_estimate_is}.
%\begin{equation}
%\label{equation_bound}
   %% q^{2-o(1)} \leq \beta \leq \frac{q^2}{2}+O(q),
%\end{equation}
%in fact you compute that $o(1)=\log \log q / \log q$.
%which proves the first part of the statement.
By Theorem \ref{theorem_key_psl}, Stirling's approximation and (1), we get
\begin{align*}
    N(S)\geq \frac{1}{2}\cdot \binom{\beta}{\beta/2} &=(1+o(1))\cdot \frac{2^\beta}{(2\pi\beta)^{1/2}} \\
    &\geq \text{exp}_2\left\{q^{2-o(1)}\right\},
\end{align*}
and the proof is complete.
\end{proof}

\section{Further comments}
\label{section_comments}
Recall that $\Gamma^+(G)$ is the graph obtained by removing the isolated vertices from the generating graph $\Gamma(G)$ of $G$. Crestani--Lucchini \cite{CreLuc2} showed that, if $G$ is a $2$-generated direct power of a nonabelian finite simple group, then $\Gamma^+(G)$ is connected.

In particular, Theorem \ref{theorem_lower_bound} shows that the result of \cite{CreLuc2} does not hold for invariable generation. Nevertheless, a combinatorial proof along the lines of \cite[Theorem 3.1]{CreLuc2} might be feasible in order to show the following: If a finite simple group $S$ is such that $\Lambda^+(S)$ is connected and not bipartite, then $\Lambda^+(S^t)$ is connected for every $t\leq \beta(S)$. At present, the connectedness of $\Lambda^+(S)$ for $S$ simple is essentially only known for alternating groups \cite{Gar}, which somewhat limits the applications of such a result.

We also remark that we are currently unable to construct examples of soluble groups $G$ for which $\Lambda^+(G)$ is disconnected.

\begin{question}
\label{question_soluble}
Let $G$ be a finite soluble group which is invariably $2$-generated. Is the graph $\Lambda^+(G)$ connected?
\end{question}
Crestani--Lucchini \cite{CreLuc} showed that this is true for the graph $\Gamma^+(G)$ (and, in particular, Question \ref{question_soluble} has a positive answer for nilpotent groups).

\subsection{$\Lambda^+(S)$ bipartite}
\label{subsection_bipartite}
%In the proof of Theorem \ref{main_theorem}, we chose $S=\PSL_2(q)$ and considered $G=S^\delta$, where $\delta=\delta(S)$ is the largest integer for which $S^\delta$ is invariably $2$-generated.
For the proof of Theorem \ref{theorem_lower_bound}, the only important property of $S=\PSL_2(q)$ is that the graph $\Lambda^+(S)$ is bipartite, which follows from the fact that $S$ admits a $2$-covering (see the proof of Lemma \ref{lemma_bipartite}).
%The only thing we used about $S=\PSL_2(q)$ is that the graph $\Lambda^+(S)$ is bipartite, which follows from the fact that $S$ admits a $2$-covering (see the proof of Lemma \ref{lemma_bipartite}).
%Recall that, given a finite group $G$, a \textit{$2$-covering} of $G$ is a pair $(H,K)$ of proper subgroups such that
%%\[
%G=\bigcup_{g\in G} H^g \cup \bigcup_{g\in G} K^g.
%\]

The $2$-coverings of the finite simple groups have been well studied; see Bubboloni \cite{Bubbo}, Bubboloni--Lucido \cite{BubboLucid}, Bubboloni--Lucido--Weigel \cite{BLW1, BLW2}, Pellegrini \cite{Pel}. In particular, all finite simple groups admitting a $2$-covering are known, except for some classical groups in small dimension. %, which are currently missing from the preprint \cite{BLW2}.
%However, the authors of \cite{BLW2} are going to complete the analysis soon (I thank Daniela Bubboloni for this personal communication). 

%We mention that the known infinite families of finite simple groups admitting a $2$-covering are $\PSL_n(q)$ for $2\leq n\leq 4$ (\cite{BubboLucid}), $\PSU_4(q)$, $\PSU_3(3^a)$, $\PSp_6(3^a)$, $\PSp_{2m}(2^a)$, $G_2(2^a)$, $F_4(3^a)$ (\cite{BLW1}).
 %We take the chance to point this out (although of course the problem is very well known and we claim no role).
%\begin{problem}
%Complete (or write down) the list of the finite simple groups which admit a $2$-covering.
%\end{problem}

We have the following clear implications:
\begin{equation}
\label{eq:implications}
\text{$S$ admits a $2$-covering} \implies \text{$\Lambda^+(S)$ is bipartite} \implies \text{$\Lambda^+(S)$ has no triangles}
\end{equation}
(These implications are a special case of the inequalities in \eqref{eq:inequalities} below.) The reverse of the first implication in \eqref{eq:implications} does not necessarily hold. For instance, $A_9$ does not admit a $2$-covering (it was proved in \cite{Bubbo} that $A_n$ admits a $2$-covering if and only if $4\leq n\leq 8$). On the other hand, it is not difficult to show that $\Lambda^+(A_9)$ is bipartite. %da una parte hai le due classi di 9-cicli e (3,3,3); dall'altra hai (2,2,5), (1,4,4), (1,1,7), (2,3,4), (1,3,5) e altre robe in cui 1,2 e 3 sono somme parziali. questo lo vedi così. se qualcosa non ha né 2 né 4 come somma parziale, allora è connesso a (2,2,5). ma verifichi che una classe con questa proprietà è solo (3,3,3) e i 9-cicli. ora devi considerare le classi in cui o 2 o 4 è somma parziale. dimostri facilmente che o è (2,2,5), o (1,4,4), o (1,1,7), o (2,3,4), o (1,3,5), oppure ha 1, 2 e 3 come somme parziali. ma a parte i 9-cicli, ogni partizione ha 1,2 o 3 come somme parziali; quindi queste robe possono essere connesse solo ai 9-cicli. d'altra parte, (2,2,5), (1,4,4), (1,1,7), (2,3,4) e (1,3,5) sono connessi possibilmente solo a (3,3,3) e ai 9-cicli; quindi hai proprio che è bipartito.
This might be one of only finitely many exceptions.

\begin{problem}
\label{problem_bipartite}
Determine the finite simple groups $S$ for which $\Lambda^+(S)$ is bipartite (resp., contains no triangles). Up to finitely many cases, do the reverse implications in \eqref{eq:implications} hold?
\end{problem}

%The fact that that $\Lambda^+(S)$ can be bipartite is somewhat surprising, in comparison to classical generation. Indeed, for every $2$-generated finite group $G$ of order at least $3$, the generating graph $\Gamma(G)$ contains a triangle; this follows from the fact that if $\gen{x,y}=G$ then $\gen{x,xy}=\gen{xy,y}=G$. The fact that this property fails for invariable generation represents a serious obstacle in order to extend results from the classical to the invariable setting. See the introduction of \cite{GarLuc} for comments in this direction.

%note also that the last question is equivalent to proving that, if you restrict the cases in which $\Lambda^+(S)$ is not bipartite, then you find only finitely many cases in which $\Lambda^+(S)$ has no triangles.

These considerations can be viewed more generally as follows. For a noncyclic finite group $G$, consider the following invariants:

\begin{itemize}
    \item[$\diamond$] Let $\kappa(G)$ be the clique number of $\Lambda^+(G)$, that is, the largest order of a complete subgraph of $\Lambda^+(G)$.
     \item[$\diamond$] Let $\tau(G)$ be the chromatic number of $\Lambda^+(G)$, that is, the least number of colours needed to colour the vertices of $\Lambda^+(G)$ in such a way that adjacent vertices get different colours. 
     \item[$\diamond$] Let $\gamma(G)$ be the normal covering number of $G$, that is, the least number of proper subgroups of $G$ such that each element of $G$ lies in some conjugate of one of these subgroups.
\end{itemize}
The following inequalities hold:
\begin{equation}
\label{eq:inequalities}
    \kappa(G)\leq \tau(G)\leq \gamma(G). 
\end{equation}
(These are ``invariable'' versions of inequalities studied for instance in \cite{LucMar}.) The implications in \eqref{eq:implications} can be stated as follows for a general noncyclic finite group $G$:
\[\gamma(G)\leq 2\implies\tau(G)\leq 2\implies \kappa(G)\leq 2.
\]
(We note that, for every finite group $G$, $\gamma(G)\geq 2$ and, by \cite{GM} and \cite{KLS}, if $S$ is nonabelian simple then $\kappa(S)\geq 2$.) Problem \ref{problem_bipartite} asks whether, up to finitely many exceptions,
\[
\gamma(S)=2\iff \tau(S)=2 \iff \kappa(S)=2.
\]
The invariants $\kappa(G)$ and $\gamma(G)$ have been studied; see for instance Britnell--Mar\'oti \cite{BritMar}, Bubboloni--Praeger--Spiga \cite{BPS} and Garonzi--Lucchini \cite{GaronziLuc}.

As a final remark, the fact that $\Lambda(G)$ can have no triangles is somewhat strange, in comparison to classical generation. Indeed, for every $2$-generated finite group $G$ of order at least $3$, the generating graph $\Gamma(G)$ contains a triangle, and indeed ``many'' triangles. This follows from the fact that if $\gen{x,y}=G$ then $\gen{x,xy}=\gen{xy,y}=G$. The fact that this property fails for invariable generation makes it difficult to extend results from the classical to the invariable setting. See the introduction of \cite{GarLuc} for comments in this direction.

\iffalse
\subsection{Estimating $\delta(S)$} %As recalled in the introduction, invariable generation was introduced by Dixon \cite{Dix92}, with the motivation of recognizing $S_n$ as a Galois group. Roughly speaking, knowing that few random \textit{elements} of $S_n$ invariably generate $S_n$ with good probability implies that we can quickly recognize whether a given separable polynomial of degree $n$ with integer coefficients has Galois group equal to the full symmetric group $S_n$.

It would be interesting to estimate $\delta(S)$ for every finite simple group $S$. In order to estimate $\delta(S)$, one essentially has to count the pairs of \textit{conjugacy classes} which invariably generate $S$ (see Lemma \ref{lemma_chain_inequalities}; $|\Out(S)|$ should be much smaller than $|\Psi_2(S)|$ in every case). This is conceptually different from Dixon's original motivation \cite{Dix92}, which required to pick \textit{elements} at random.

%It would be interesting to estimate $\delta(S)$ for every finite simple group, also in comparison to $\beta=\beta(S)$  --- the largest integer for which $S^\beta$ is $2$-generated.

%We propose a vague problem.

%\begin{problem}
%Investigate the problem of invariable generation of finite simple groups by random conjugacy classes.
%\end{problem}

We conclude with a little remark. Note that Lemma \ref{lemma_chain_inequalities} gives bounds, rather than an exact formula. In case of classical generation, instead, we do have an exact formula. Namely, let $\beta=\beta(S)$ be the largest integer for which $S^\beta$ is $2$-generated. Then
$\beta(S)=\phi_2(S)/|\Aut(S)|$, where $\phi_2(S)$ denotes the number of ordered pairs $(x,y)\in S^2$ such that $\gen{x,y}=S$. (This goes back to Hall \cite{Hal} in 1930s and has been widely used.) The difference is that the diagonal action of $\Aut(S)$ on the set of generating pairs of elements is semiregular (i.e., only the identity fixes a generating pair), while this needs not be the case for the action of $\Aut(S)$ on the set of invariable generating pairs of conjugacy classes.
\fi

\bibliography{references}
\bibliographystyle{alpha}
\end{document}